\newtheorem{theorem}{Theorem}[section]
\newtheorem{thm}{Theorem}
\newtheorem{lemma}[theorem]{Lemma}
\newtheorem{proposition}[theorem]{Proposition}
\newtheorem{conjecture}{Conjecture}
\theoremstyle{definition}
\newcommand{\eqM}{\overset{M}{=}}
\def\BC{\mathbb C}
\def\BZ{\mathbb Z}
\def\BJ{\mathbb J}
\def\CA{\mathcal A}
\def\CR{\mathcal R}
\def\CT{\mathcal T}
\def\ve{\varepsilon}
\def\be { \begin{equation} }
\def\ee { \end{equation} }
\begin{document}

\title[AJ conjecture]{On the AJ conjecture for cables of the figure eight knot}

\author[Anh T. Tran]{Anh T. Tran}
\address{Department of Mathematical Sciences, The University of Texas at Dallas, 
800 W Campbell Rd, FO 35, Richardson TX 75080, USA}
\email{att140830@utdallas.edu}

\begin{abstract}
The AJ conjecture relates the A-polynomial and the colored Jones polynomial of a knot in the 3-sphere. It has been verified for some classes of knots, including all torus knots, most double twist knots, $(-2,3,6n \pm 1)$-pretzel knots, and most cabled knots over torus knots. In this paper we study the AJ conjecture for $(r,2)$-cables of a knot, where $r$ is an odd integer. In particular, we show that the $(r,2)$-cable of the figure eight knot satisfies the AJ conjecture if $r$ is an odd integer satisfying $|r| \ge 9$.
\end{abstract}

\thanks{2010 {\em Mathematics Classification:} Primary 57N10. Secondary 57M25.\\
{\em Key words and phrases: colored Jones polynomial, A-polynomial, AJ conjecture, figure eight knot.}}

\maketitle

\section{Introduction}

\subsection{The colored Jones function} For a knot $K$ in the 3-sphere and a positive integer $n$, let $J_K(n) \in \BZ[t^{\pm 1}]$ denote the $n$-colored Jones polynomial of $K$ with framing zero. The polynomial $J_K(n)$ is the quantum link invariant, as defined by Reshetikhin and Turaev \cite{RT}, associated to the Lie algebra $sl_2(\BC)$, with the color $n$ standing for the irreducible $sl_2(\BC)$-module $V_{n}$ of dimension $n$. Here we use the functorial normalization, i.e. the one for which the colored Jones polynomial of the unknot $U$ is
$$J_U(n)=[n] := \frac{t^{2n}- t^{-2n}}{t^2 -t^{-2}}.$$ 
For example, the colored Jones polynomial of the figure eight knot $E$ is  
$$J_E(n)= [n] \sum_{k=0}^{n-1} \prod_{l=1}^k 
(t^{4n}+t^{-4n}-t^{4l}-t^{-4l}).$$ 

It is known that $J_K(1)=1$ and $J_K(2)$ is the usual Jones polynomial \cite{Jones}. The colored Jones polynomials of higher colors are more or less the usual Jones polynomials of parallels of the knot. The color $n$ can be assumed to take negative integer values by setting $J_K(-n) = - J_K(n)$. In particular, we have $J_K(0)=0$.

The colored Jones polynomials are not random. For a fixed knot $K$, Garoufalidis and Le \cite{GL} proved that the colored Jones function $J_K: \BZ \to \BZ[t^{\pm 1}]$ satisfies a non-trivial linear recurrence relation of the form $$\sum_{k=0}^da_k(t,t^{2n})J_K(n+k)=0,$$
where $a_k(u,v) \in \BC[u,v]$ are polynomials with greatest common divisor 1. 

\subsection{Recurrence relations and $q$-holonomicity} Let $\CR:=\BC[t^{\pm 1}]$. Consider a discrete function $f: \BZ \to \CR$, and define the linear operators $L$ and $M$ acting on such functions by
$$(Lf)(n) := f(n+1), \qquad (Mf )(n) := t^{2n} f(n).$$
It is easy to see that $LM = t^2 ML$. The inverse operators $L^{-1}, M^{-1}$ are well-defined. We can consider $L,M$ as elements of the quantum torus
$$ \mathcal T := \mathbb \CR\langle L^{\pm1}, M^{\pm 1} \rangle/ (LM - t^2 ML),$$
which is a non-commutative ring.

The recurrence ideal of the discrete function $f$ is the left ideal $\CA_f$ in $\CT$ that annihilates $f$:
$$\CA_f:=\{P \in \CT \mid Pf=0\}.$$
We say that $f$ is $q$-holonomic, or $f$ satisfies a non-trivial linear recurrence relation, if $\CA_f \not= 0$. For example, for a fixed knot $K$ the colored Jones function $J_K$ is $q$-holonomic.

\subsection{The recurrence polynomial of a $q$-holonomic function} Suppose that $f: \BZ \to \CR$ is a $q$-holonomic function. Then $\CA_f$ is a non-zero left ideal of $\CT$. The ring $\mathcal T$ is not a principal left ideal domain, i.e. not every left ideal of $\mathcal T$ is generated by one element. Garoufalidis \cite{Ga04} noticed that by adding all inverses of polynomials in $t,M$ to $\mathcal T$ we get a principal left ideal domain $\tilde\CT$, and hence from the ideal $\CA_K$ we can define a polynomial invariant. Formally, we can proceed as follows. Let $\CR(M)$ be the fractional field of the polynomial ring $\CR[M]$. Let $\tilde{\CT}$ be the  set of all Laurent polynomials in the variable $L$ with coefficients in $\CR(M)$:
$$\tilde{\CT} =\left\{\sum_{k \in \BZ} a_k(M)L^k \mid a_k(M) \in \CR(M),~a_k=0\text{~almost always}\right\},$$
and define the product in $\tilde{\CT}$ by $a(M)L^{k} \cdot b(M)L^{l}=a(M)b(t^{2k}M)L^{k+l}$.  

Then it is known that every left ideal in $\tilde{\CT}$ is principal, and $\CT$ embeds as a subring of $\tilde{\CT}$. The extension $\tilde\CA_f:=\tilde\CT\mathcal A_f$ of $\CA_f$ in $\tilde{\CT}$ is then generated by a single polynomial 
$$\alpha_f(t,M,L) = \sum_{k=0}^{d} \alpha_{f,k}(t,M) \, L^k,$$
where the degree in $L$ is assumed to be minimal and all the
coefficients $\alpha_{f,k}(t,M)\in \BC[t^{\pm1},M]$ are assumed to
be co-prime. The polynomial $\alpha_f$ is defined up to a polynomial in $\mathbb C[t^{\pm 1},M]$. We call $\alpha_f$ the recurrence polynomial of the discrete function $f$.

When $f$ is the colored Jones function $J_K$ of a knot $K$, we let $\CA_K$ and $\alpha_K$ denote the recurrence ideal $\CA_{J_K}$ and the recurrence polynomial $\alpha_{J_K}$ of $J_K$ respectively. We also say that $\CA_K$ and $\alpha_{K}$ are  the recurrence ideal and the recurrence polynomial of the knot $K$. Since $J_K(n) \in \BZ[t^{\pm 1}]$, we can assume that $\alpha_K(t,M,L)=\sum_{k=0}^d \alpha_{K,k}(t,M)L^k$
where all the coefficients $\alpha_{K,k} \in \BZ[t^{\pm 1}, M]$ are co-prime.

\subsection{The AJ conjecture}

The colored Jones polynomials are powerful invariants of knots, but little is known about their relationship with classical topology invariants like the fundamental group. Inspired by the theory of noncommutative A-ideals of Frohman, Gelca and Lofaro \cite{FGL, Ge} and the theory of $q$-holonomicity of quantum invariants of Garoufalidis and Le \cite{GL}, Garoufalidis \cite{Ga04} formulated the following conjecture that relates the A-polynomial and the colored Jones polynomial of a knot in the 3-sphere.

\begin{conjecture}
\label{c1}
{\bf (AJ conjecture)} For every knot $K$, $\alpha_K |_{t=-1}$ is equal to the $A$-polynomial, up to a factor depending on $M$ only.
\end{conjecture}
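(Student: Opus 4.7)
The plan is to attack the AJ conjecture uniformly through the Kauffman bracket skein theory of the knot exterior, following the non-commutative $A$-ideal framework of Frohman--Gelca--Lofaro combined with Garoufalidis's recurrence formalism. Let $M = S^3 \setminus K$, write $S_t(M)$ for the Kauffman bracket skein module over $\CR$, and let $S_t(\partial M)$ be the skein algebra of the boundary torus, which acts on $S_t(M)$ by gluing collars. Under the standard identification, elements $L$ and $M^2$ of $\CT$ correspond to skeins on $\partial M$ representing the canonical longitude and meridian, so the annihilator of the empty skein inside $\CT$ (viewed through this action) is a left ideal $\fp_K \subset \CT$, the \emph{quantum peripheral ideal}.

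First I would establish the identification $\CA_K = \fp_K$ after suitable completion. One direction is essentially tautological: any skein relation in $S_t(M)$ that kills the empty skein, when paired with cables of $K$ colored by Chebyshev polynomials, produces a recurrence for $J_K(n)$. The reverse containment requires a faithfulness statement for the action of $\CT$ on the colored Jones function. Granted this, $\alpha_K$ is the generator of the principal extension $\tilde\CT \cdot \fp_K \subset \tilde\CT$ and thus inherits geometric meaning from the skein module.

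Next I would pass to the classical limit $t = -1$. At this value $S_{-1}(M) \cong \BC[X(M)]$, the coordinate ring of the $SL_2(\BC)$-character variety of $\pi_1 M$, and $S_{-1}(\partial M)$ is identified with the subring of $\BC[L^{\pm 1}, M^{\pm 1}]$ invariant under the hyperelliptic involution $(L,M) \mapsto (L^{-1}, M^{-1})$. Thus the classical specialization of $\fp_K$ cuts out the closure of the image of the restriction map $X(M) \to X(\partial M)$, whose non-abelian part is exactly the zero locus of the $A$-polynomial. It then follows from a short dimension count that $A(M,L)$ divides $\alpha_K(-1,M,L)$ in $\BC[M,L]$, up to factors in $M$ alone coming from the abelian component of $X(M)$.

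The genuinely hard part---the reason the AJ conjecture remains open in general---is the \emph{reverse} divisibility: ruling out a spurious factor (other than one in $M$ only) in $\alpha_K(-1,M,L)$. This would follow from a flatness statement asserting that the rank of $\tilde\CT / \tilde\CA_K$ does not jump at $t=-1$, or equivalently that $\fp_K$ is already the full annihilator over $\CR$ rather than merely after localization at $t=-1$. Establishing such flatness in full generality likely requires either a controlled presentation of $S_t(M)$ (available only for special geometric classes such as two-bridge knots, pretzel knots, and cables) or a genuinely new non-commutative Gr\"obner-type input. In lieu of a uniform proof, the strategy adopted in the literature---and the one the present paper pursues for $(r,2)$-cables of the figure eight knot---is to verify the conjecture family by family; a proposal for the general conjecture would have to supply a new ingredient closing precisely this gap.
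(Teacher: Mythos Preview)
The statement you chose is Conjecture~\ref{c1}, the AJ conjecture itself. The paper does not prove this statement and does not claim to; it is stated as an open conjecture, and the paper's contribution (Theorem~\ref{main}) is only to verify it for the family of $(r,2)$-cables of the figure eight knot with $|r|\ge 9$, via the explicit recurrence construction and degree estimates of Section~\ref{figure 8}. There is therefore no proof in the paper against which to compare your proposal.

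As for the content of your sketch: it is a reasonable outline of the skein-theoretic heuristic behind the conjecture, and you are candid that the flatness step---ruling out spurious factors in $\alpha_K|_{t=-1}$---is the principal obstruction. But an earlier step is also open: the identification $\CA_K=\fp_K$ that you say you ``would establish'' is itself unproved in general. Only the inclusion $\fp_K\subseteq\CA_K$ is the straightforward direction; the reverse inclusion---that every recurrence of $J_K$ comes from a peripheral skein relation---is precisely the faithfulness statement you invoke with ``Granted this,'' and it is not known for arbitrary knots. So your proposal contains at least two genuine gaps rather than one, and as written it is a plausibility argument for the conjecture, not a proof. That is consistent with the current state of the subject, but it means there is nothing here that the paper could have been expected to match.
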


The A-polynomial of a knot was introduced by Cooper et al. \cite{CCGLS}; it describes the $SL_2(\BC)$-character variety of the knot complement as viewed from the boundary torus. The A-polynomial carries important information about the geometry and topology of the knot. For example, it distinguishes the unknot from other knots \cite{DG, BZ}, and the sides of its Newton polygon give rise to incompressible surfaces in the knot complement \cite{CCGLS}. Here in the definition of the $A$-polynomial, we also allow the factor $L-1$ coming from the abelian component of the character variety of the knot group. Hence the A-polynomial in this paper is equal to $L-1$ times the A-polynomial defined in \cite{CCGLS}.

The AJ conjecture has been verified for the trefoil knot, the figure eight knot (by Garoufalidis \cite{Ga04}), all torus knots (by Hikami \cite{Hi}, Tran \cite{Tr}), some classes of two-bridge knots and pretzel knots including most double twist knots and $(-2,3,6n \pm 1)$-pretzel knots (by Le \cite{Le06}, Le and Tran \cite{LTaj}), the knot $7_4$  (by Garoufalidis and Koutschan \cite{GK}), and most cabled knots over torus knots (by Ruppe and Zhang \cite{RZ}). 

Note that there is a stronger version of the AJ conjecture, formulated by Sikora \cite{Si}, which relates the recurrence ideal and the A-ideal of a knot. The A-ideal determines the A-polynomial of a knot. This conjecture has been verified for the trefoil knot (by Sikora \cite{Si}), all torus knots \cite{Tr} and most cabled knots over torus knots \cite{Tr_strongAJ}.

\subsection{Main result} Suppose $K$ is a knot with framing zero, and $r,s$ are two integers with $c$ their greatest common divisor. The $(r,s)$-cable $K^{(r,s)}$ of $K$ is the
link consisting of $c$ parallel copies of the $(\frac{r}{c},\frac{s}{c})$-curve on the torus boundary of a tubular neighborhood of $K$. Here an $(\frac{r}{c},\frac{s}{c})$-curve is a curve that is homologically
equal to $\frac{r}{c}$ times the meridian and $\frac{s}{c}$ times the longitude on the torus boundary.
The cable $K^{(r,s)}$ inherits an orientation from $K$, and we assume that each component of $K^{(r,s)}$ has framing zero. Note that if $r$ and $s$ are co-prime, then $K^{(r,s)}$ is again a knot. 

In \cite{LTvol}, we studied the volume conjecture \cite{Ka, MuM} for $(r,2)$-cables of a knot and especially $(r,2)$-cables of the figure eight knot, where $r$ is an integer. 
In this paper we study the AJ conjecture for $(r,2)$-cables of a knot, where $r$ is an odd integer. In particular, we will show the following.

\begin{thm}
\label{main}
The $(r,2)$-cable of the figure eight knot satisfies the AJ conjecture if $r$ is an odd integer satisfying $|r| \ge 9$.
\end{thm}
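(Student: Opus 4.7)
\section*{Proof proposal for Theorem \ref{main}}

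The plan is to follow the usual two-step strategy for verifying the AJ conjecture: first exhibit an explicit element of the recurrence ideal $\CA_{E^{(r,2)}}$ whose reduction at $t=-1$ matches the A-polynomial of $E^{(r,2)}$ up to an $M$-factor, and then prove this element is in fact the recurrence polynomial by ruling out any lower-degree annihilator. Throughout, write $C=E^{(r,2)}$ for the cable.

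First I would set up the cabling side. There is a standard cabling formula expressing $J_C(n)$ as an explicit $\CR$-linear combination of the values $J_E(2k+1)$ for $k$ in a range determined by $n$ and $r$; for the $(r,2)$-cable with $r$ odd the formula is relatively clean because only odd colors of the companion appear. Combining this formula with the known recurrence polynomial $\alpha_E(t,M,L)$ of the figure eight knot (computed by Garoufalidis) and manipulating operators in $\tilde\CT$, one obtains a polynomial $P(t,M,L)\in\tilde\CT$ that annihilates $J_C$. This is the same mechanism used by Ruppe--Zhang for cables over torus knots; the difference here is that the companion has a nontrivial two-variable recurrence rather than a one-variable one, so $P$ will have $L$-degree equal to twice the $L$-degree of $\alpha_E$, i.e.\ degree $6$, together with an extra factor reflecting the $(r,2)$-cabling operation.

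Next I would compute the A-polynomial $A_C(M,L)$ of the cable. The $SL_2(\BC)$-character variety of $\pi_1(S^3\setminus C)$ decomposes according to whether the meridian of the companion $E$ goes to an abelian or nonabelian representation. This gives a factorization
\[
A_C(M,L) \;\eqM\; (L-1)\cdot A_C^{\mathrm{ab}}(M,L)\cdot A_C^{\mathrm{nab}}(M,L),
\]
where $\eqM$ denotes equality up to a factor depending on $M$ only, $A_C^{\mathrm{ab}}$ records nonabelian reps of $\pi_1(S^3\setminus C)$ that restrict abelianly to $E$ (a cabling contribution expressible in terms of $M$ and $L$ via the $(r,2)$ slope), and $A_C^{\mathrm{nab}}$ is obtained from the nonabelian A-polynomial of the figure eight knot by the substitution $M\mapsto M^2$, $L\mapsto M^{2r}L^2$ up to a cabling correction. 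Combining these yields an explicit formula for $A_C$ whose total $L$-degree matches that of $P|_{t=-1}$ after stripping $M$-only factors, confirming that
\[
P(-1,M,L) \;\eqM\; A_C(M,L).
\]

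The main obstacle, and the step where the hypothesis $|r|\ge 9$ enters, is the minimality argument: I must show that the recurrence polynomial $\alpha_C$ has $L$-degree equal to $\deg_L P$ rather than strictly less. The standard trick is to assume a lower-degree annihilator $Q\in\CA_C$ exists; then $Q|_{t=-1}$ would be a proper divisor (modulo $M$-factors) of $A_C(M,L)$, forcing a proper subset of the components above to account for $J_C$. Using irreducibility of the nonabelian factor of the figure eight A-polynomial together with an analysis of the cabling factor $A_C^{\mathrm{ab}}$, one checks that for $|r|\ge 9$ odd the three factors $L-1$, $A_C^{\mathrm{ab}}$, and $A_C^{\mathrm{nab}}$ are pairwise coprime in $\BC[M^{\pm 1}][L]$ and the latter two are irreducible; the bound $|r|\ge 9$ is what forces the Newton polygon slopes associated to these factors to be distinct and forbids accidental factorizations of $A_C^{\mathrm{ab}}$. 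A degree count on the classical side then forces $\deg_L Q \ge \deg_L P$, a contradiction, so $\alpha_C \eqM P$ and the AJ conjecture follows for $C$. I expect the irreducibility/coprimality analysis of the cabling factor $A_C^{\mathrm{ab}}$ to be the most technically delicate point and to be the source of the explicit bound on $|r|$.
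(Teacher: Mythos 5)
Your overall architecture (produce an explicit annihilator, match it at $t=-1$ with the cabling formula for the A-polynomial, then rule out lower-degree annihilators) is the same as the paper's, but the step that carries all the difficulty --- minimality --- is where your argument has a genuine gap. You propose to derive a contradiction from a hypothetical lower-degree annihilator $Q$ by arguing that $Q|_{t=-1}$ would be a proper divisor of $A_{E^{(r,2)}}$ and then doing a ``degree count on the classical side.'' But there is no a priori relation between $Q|_{t=-1}$ and the A-polynomial: that relation is precisely the AJ conjecture, which is what you are trying to prove, so irreducibility and coprimality of the factors of $A_{E^{(r,2)}}$ say nothing by themselves about which operators can annihilate the colored Jones function. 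The paper's minimality argument lives entirely on the quantum side: it takes a hypothetical degree-$3$ annihilator $P=\sum_k P_kL^k$, uses the relation $J_{E^{(r,2)}}(n+1)=-t^{-2r(2n+1)}J_{E^{(r,2)}}(n)+t^{-2rn}J_E(2n+1)$ to rewrite $PJ_{E^{(r,2)}}=0$ as $P_3'J_{E^{(r,2)}}(n)+P_2'J_E(2n+5)+P_1'J_E(2n+3)+P_0'J_E(2n+1)=0$, and then compares explicit degree formulas: for $|r|\ge 9$ one of $d_{\pm}[J_{E^{(r,2)}}(n)]$ equals $-2rn^2+O(n)$, while the corresponding extreme degrees of the $J_E(2n+k)$ terms are $\pm 16n^2+O(n)$; since $2|r|>16$ this forces $P_3'=0$. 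This is also where the hypothesis $|r|\ge 9$ actually enters --- the index maximizing the degree in the cabling sum switches regimes between $|r|\le 7$ and $|r|\ge 9$, and for $|r|\le 7$ the quadratic coefficients coincide and the argument collapses --- not, as you suggest, through Newton polygon slopes or accidental factorizations of the cabling factor $L+M^{-2r}$, which is linear in $L$ and never factors, and which is coprime to the other factors for every odd $r$.

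Even after $P_3'=0$ one is left with a degree-$\le 2$ operator annihilating $\BJ_E(n)=J_E(2n+1)$, and ruling this out requires further machinery absent from your sketch: the Melvin--Morton theorem (to show that $\ve(P)$ is divisible by $L-1$ for every $P$ in the recurrence ideal of $\BJ_E$), the quadratic growth of the breadth of the colored Jones polynomial of a nontrivial alternating knot (to show $\alpha_{\BJ_E}$ has $L$-degree $>1$), and the irreducibility of the degree-$2$ polynomial $L^2-((M^8+M^{-8}-M^4-M^{-4}-2)^2-2)L+1$ arising as the $t=-1$ reduction of the known annihilator of $\BJ_E$. A smaller discrepancy in your first step: the paper does not compose the cabling formula with the homogeneous recurrence $\alpha_E$ (which would produce $L$-degree higher than necessary, and your predicted degree $6$ does not match $\deg_L A_{E^{(r,2)}}=4$); it instead starts from the \emph{inhomogeneous} order-$2$ recurrence of Charles--March\'e for $J_E$, restricts it to odd colors, composes with the order-$1$ cabling operator $M^r(L+t^{-2r}M^{-2r})$, and homogenizes, yielding an annihilator of $L$-degree exactly $4$ whose reduction at $t=-1$ is $A_{E^{(r,2)}}$ up to an $M$-factor.
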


\subsection{Plan of the paper} In Section \ref{cable} we prove some properties of the colored Jones polynomial of cables of a knot. In Section \ref{figure 8} we study the AJ conjecture for $(r,2)$-cables of the figure eight knot and prove Theorem \ref{main}.

\subsection{Acknowledgment} I would like to thank Thang T.Q. Le and Xingru Zhang for helpful discussions. I would also like to thank the referee for comments and suggestions. Dennis Ruppe \cite{Ru} has independently obtained a similar result to Theorem \ref{main}.

\section{The colored Jones polynomial of cables of a knot} 

\label{cable}

Recall from the introduction that for each positive integer $n$, there is a unique irreducible $sl_2(\BC)$-module $V_n$ of dimension $n$.  

From now on we assume that $r$ is an odd integer. Then the $(r,2)$-cable $K^{(r,2)}$ of a knot $K$ is a knot. The calculation of the colored Jones polynomial of $K^{(r,2)}$ is standard: we decompose $V_n \otimes V_n$ into irreducible components
$$V_n \otimes V_n = \bigoplus_{k=1}^n V_{2k-1}.$$
Since the $R$-matrix commutes with the actions of the quantized algebra, it acts on each component $V_{2k-1}$ as a scalar $\mu_k$ times the identity. The value of $\mu_k$ is well-known:
$$\mu_k = (-1)^{n-k} t^{-2(n^2-1)} t^{2k(k-1)}.$$
Hence from the theory of quantum invariants (see  e.g. \cite{Ohtsuki}), we have
\begin{eqnarray}
J_{K^{(r,2)}}(n) &=& \sum_{k=1}^n \mu_k^r J_K(2k-1) \nonumber\\
                 &=& t^{-2r(n^2-1)}\sum_{k=1}^n (-1)^{r(n-k)} t^{2rk(k-1)} J_K(2k-1). \label{cables}
\end{eqnarray}

Note that $t$ in this paper is equal to $q^{1/4}$ in \cite{LTvol}.
\begin{lemma}
\label{J_C}
We have
$$J_{K^{(r,2)}}(n+1) = - t^{-2r(2n+1)}J_{K^{(r,2)}}(n)+ t^{-2rn}J_K(2n+1).$$
\end{lemma}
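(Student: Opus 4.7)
The plan is to derive the claimed first-order recurrence directly from the explicit sum formula \eqref{cables} by peeling off the top term and re-identifying the remaining sum as $J_{K^{(r,2)}}(n)$.

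First I would use the hypothesis that $r$ is odd to rewrite $(-1)^{r(n-k)}$ as $(-1)^{n-k}$, so that formula \eqref{cables} becomes
$$J_{K^{(r,2)}}(n) = t^{-2r(n^2-1)} S_n, \qquad S_n := \sum_{k=1}^n (-1)^{n-k} t^{2rk(k-1)} J_K(2k-1).$$
The key observation is that $S_n$ satisfies a trivial one-step recursion in disguise. Splitting off the $k=n+1$ term in $S_{n+1}$ and using $(-1)^{(n+1)-k} = -(-1)^{n-k}$ for the remaining terms gives
$$S_{n+1} = t^{2rn(n+1)} J_K(2n+1) - S_n.$$

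Second, I would substitute this back into $J_{K^{(r,2)}}(n+1) = t^{-2r((n+1)^2-1)} S_{n+1}$. After expanding $(n+1)^2 - 1 = n^2 + 2n$ and simplifying the exponents, the $J_K(2n+1)$ term comes out with coefficient $t^{-2rn}$ and the $S_n$ term has coefficient $-t^{-2r(n^2+2n)}$, which equals $-t^{-2r(2n+1)} \cdot t^{-2r(n^2-1)}$. The latter factor is exactly what is needed to reconstitute $J_{K^{(r,2)}}(n)$, giving the asserted identity.

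There is no real obstacle here beyond careful bookkeeping of the exponents of $t$; the parity hypothesis on $r$ is used only once (to simplify the alternating signs so that $S_{n+1} + S_n$ telescopes to a single term). The entire argument is a direct manipulation of the sum formula and does not require any further input.
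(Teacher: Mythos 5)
Your proof is correct and follows essentially the same route as the paper: both peel off the $k=n+1$ term of the sum formula \eqref{cables} and re-identify the remaining sum as $t^{-2r(2n+1)}$ times $J_{K^{(r,2)}}(n)$, using $n^2+2n = (2n+1)+(n^2-1)$. The only cosmetic difference is that you invoke the parity of $r$ at the outset to replace $(-1)^{r(n-k)}$ by $(-1)^{n-k}$, whereas the paper carries the factor $(-1)^r$ along and sets it to $-1$ at the end.
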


\begin{proof}
From Eq. \eqref{cables} we have 
\begin{eqnarray*}
&& J_{K^{(r,2)}}(n+1)\\
 &=& t^{-2r(n^2+2n)}\sum_{k=1}^{n+1} (-1)^{r(n+1-k)} t^{2rk(k-1)} J_K(2k-1)\\
                   &=& t^{-2rn}J_K(2n+1) + (-1)^r t^{-2r(n^2+2n)}\sum_{k=1}^{n} (-1)^{r(n-k)} t^{2rk(k-1)} J_K(2k-1)\\
                   &=& t^{-2rn}J_K(2n+1) + (-1)^r t^{-2r(2n+1)}J_{K^{(r,2)}}(n).
\end{eqnarray*}
The lemma follows, since $(-1)^r=-1$.
\end{proof}

Let $\BJ_K(n):=J_K(2n+1)$. Note that $q$-holonomicity is preserved under taking subsequences of the form $kn+l$, see e.g. \cite{KK}. Since $J_K$ is $q$-holonomic, we have the following.

\begin{proposition}
\label{BJ}
For a fixed knot $K$, the function $\BJ_K$ is $q$-holonomic.
\end{proposition}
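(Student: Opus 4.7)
The strategy is the one alluded to by the reference to \cite{KK}: since $\BJ_K$ is obtained from $J_K$ by the arithmetic progression substitution $n \mapsto 2n+1$, a recurrence for $\BJ_K$ can be extracted from a recurrence for $J_K$ by a simple linear-algebra argument in the quantum torus.

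First, I would use that $J_K$ is $q$-holonomic to pass to the extended ideal $\tilde{\CA}_{J_K}\subset \tilde\CT$, which is principal, generated by $\alpha_K(t,M,L)$ of some $L$-degree $d$. Then the quotient $V:=\tilde\CT/\tilde\CA_{J_K}$ is a $\CR(M)$-vector space of dimension $d$, with basis $\{1,L,\ldots,L^{d-1}\}$. Consequently the $d+1$ classes $L^0,L^2,L^4,\ldots,L^{2d}$ must be linearly dependent over $\CR(M)$, which yields a non-trivial relation
\[
P \;=\; \sum_{j=0}^{d} b_j(M)\,L^{2j} \;\in\; \tilde\CA_{J_K},\qquad b_j(M)\in \CR(M).
\]

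Second, I would clear denominators in $M$ so that all $b_j(M)$ lie in $\CR[M]$; then $P\in\CT$ annihilates $J_K$, i.e.\ $\sum_{j=0}^{d} b_j(t^{2n})\,J_K(n+2j)=0$ for every $n\in\BZ$. Substituting $n=2m+1$ rewrites this as
\[
\sum_{j=0}^{d} b_j\!\left(t^{2}\cdot t^{4m}\right)\BJ_K(m+j) \;=\; 0,
\]
which is exactly the statement that the operator $Q:=\sum_{j=0}^{d} b_j(t^{2}M^{2})\,L^{j}\in\CT$ kills $\BJ_K$.

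Finally, I would check that $Q$ is genuinely non-zero: the substitution $M\mapsto t^{2}M^{2}$ is injective on $\CR[M]$, so $b_j(t^{2}M^{2})=0$ forces $b_j=0$, contradicting the non-triviality of $P$. Therefore $\CA_{\BJ_K}\neq 0$ and $\BJ_K$ is $q$-holonomic. The only mildly non-trivial point is the dimension-count that produces $P$, and that is immediate once one works in $\tilde\CT$ rather than in $\CT$; everything else is a direct substitution, so I do not anticipate any real obstacle.
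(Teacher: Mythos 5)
Your argument is correct and self-contained, whereas the paper offers no proof at all for this proposition: it simply cites \cite{KK} for the fact that $q$-holonomicity is preserved under passing to subsequences of the form $kn+l$. What you have written is essentially the standard proof of that cited fact in the case $k=2$, $l=1$: the dimension count in $\tilde\CT/\tilde\CA_{J_K}$ producing a relation supported on even powers of $L$, followed by the substitution $n\mapsto 2m+1$ (i.e.\ $M\mapsto t^2M^2$, $L^2\mapsto L$), is exactly right, and the injectivity of $M\mapsto t^2M^2$ correctly guarantees non-triviality of the resulting operator. One small point to tighten: when you pass from $P=\sum_j b_j(M)L^{2j}\in\tilde\CA_{J_K}$ to an honest annihilator in $\CT$, clearing the denominators of the $b_j$ alone only shows that the result lies in $\tilde\CA_{J_K}\cap\CT$, which is not by definition contained in $\CA_{J_K}$ (the action of $\tilde\CT$ on discrete functions is not literally defined, so one cannot just "divide back"). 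The clean fix is to write $P=\sum_i c_iP_i$ with $c_i\in\tilde\CT$ and $P_i\in\CA_{J_K}$, and multiply by a common denominator $g(M)$ of the $c_i$: then $g(M)P=\sum_i\bigl(g(M)c_i\bigr)P_i$ lies in the left ideal $\CA_{J_K}$ of $\CT$, is nonzero because $\tilde\CT$ is a domain, and is still supported on even powers of $L$, after which your substitution and injectivity argument go through verbatim.
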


Note that $\BJ_K (n-1)+\BJ_K (-n)=0$. Recall that $\CA_{\BJ_K}$ and $\alpha_{\BJ_K}$ denote the recurrence ideal and the recurrence polynomial of $\BJ_K$ respectively.

\begin{lemma}
\label{inv}
If $P(t,M,L) \in \CA_{\BJ_K}$ then $P(t,(t^{2}M)^{-1},L^{-1}) \in \CA_{\BJ_K}$.
\end{lemma}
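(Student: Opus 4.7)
\emph{Proof plan.} The approach is to exploit the symmetry $\BJ_K(m) = -\BJ_K(-m-1)$ noted just before the statement, which comes from $J_K(-n) = -J_K(n)$ evaluated at odd arguments. The substitution $M \mapsto (t^2M)^{-1}$, $L \mapsto L^{-1}$ should mirror the reflection $n \mapsto -n-1$ on the index variable at the level of annihilating operators, and my plan is to verify this correspondence by a direct computation.

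Concretely, I would write a general annihilator $P \in \CA_{\BJ_K}$ in the normal form $P = \sum_{i,j} c_{i,j}(t)\, M^i L^j$ (all $M$'s to the left of $L$'s). Formal substitution in this normal form gives
\[ Q := P(t,(t^2M)^{-1},L^{-1}) = \sum_{i,j} c_{i,j}(t)\, t^{-2i}\, M^{-i} L^{-j}, \]
and, using $(M^{-i}L^{-j}\BJ_K)(n) = t^{-2in}\BJ_K(n-j)$, a direct evaluation yields
\[ (Q \BJ_K)(n) = \sum_{i,j} c_{i,j}(t)\, t^{-2i(n+1)}\, \BJ_K(n-j). \]

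Next I would apply the symmetry term-by-term as $\BJ_K(n-j) = -\BJ_K(j-n-1)$ and introduce the change of variables $m := -n-1$, so that $n+1 = -m$ and $j - n - 1 = m + j$. The sum then transforms into
\[ -\sum_{i,j} c_{i,j}(t)\, t^{2im}\, \BJ_K(m+j) \;=\; -(P \BJ_K)(m), \]
which vanishes since $P \in \CA_{\BJ_K}$. Hence $Q \BJ_K \equiv 0$, proving $Q \in \CA_{\BJ_K}$.

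I do not anticipate a substantial obstacle; the argument is essentially bookkeeping. The one point to keep track of is that the two powers of $t^2$ arising from the substitution $M \mapsto (t^2M)^{-1}$ and from the action of $M^{-i}$ on functions must combine to give precisely the exponent $-2i(n+1)$ that matches the reflected index $m = -n-1$; this is what makes the reflection symmetry at the level of sequences correspond exactly to the prescribed substitution at the level of operators.
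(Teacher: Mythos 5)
Your proposal is correct and is essentially the same argument as the paper's: both rest on the symmetry $\BJ_K(n-1)+\BJ_K(-n)=0$ and the same bookkeeping identifying the substitution $M\mapsto (t^2M)^{-1}$, $L\mapsto L^{-1}$ with the reflection $n\mapsto -n-1$. The only cosmetic difference is that the paper evaluates $P\BJ_K$ at $-n-1$ and reads off the operator $Q$, while you evaluate $Q\BJ_K$ at $n$ and reduce it to $-(P\BJ_K)(-n-1)$.
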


\begin{proof}
Suppose that $P(t,M,L)=\sum \lambda_{k,l} M^kL^l$, where $\lambda_{k,l} \in \CR=\BC[t^{\pm 1}]$, annihilates $\BJ_K$. Since $\BJ_K (n-1)+\BJ_K (-n)=0$ for all integers $n$, we have
\begin {eqnarray*} 
0 &=& P\BJ_K(-n-1) \\
  &=& \sum \lambda_{k,l} \, t^{-2(n+1)k} \BJ_K(-n-1+l) \\
  &=& - \sum \lambda_{k,l} \, t^{-2(n+1)k} \BJ_K(n-l) \\
  &=& - \sum \lambda_{k,l} (t^{2}M)^{-k} L^{-l} \BJ_K(n).
\end{eqnarray*}
Hence $P(t,(t^2M)^{-1},L^{-1}) \BJ_K=0$.
\end{proof}

For a Laurent polynomial $f(t) \in \CR$, let $d_+[f]$ and $d_-[f]$ be respectively the maximal and minimal degree of $t$ in $f$. The difference $br[f]:=d_+[f]-d_-[f]$ is called the breadth of $f$.

\begin{lemma}
\label{deg}
Suppose $K$ is a non-trivial alternating knot. Then $br[\BJ_K(n)]$ is a quadratic polynomial in $n$.
\end{lemma}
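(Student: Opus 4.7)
The plan is to reduce the statement about $\BJ_K$ to the analogous statement about $J_K$ itself. Since $\BJ_K(n)=J_K(2n+1)$, we have $br[\BJ_K(n)]=br[J_K(2n+1)]$, and any quadratic polynomial in $m$ evaluated at $m=2n+1$ remains a quadratic polynomial in $n$. Consequently it suffices to prove the reduced claim: \emph{for a non-trivial alternating knot $K$, the breadth $br[J_K(m)]$ is a quadratic polynomial in $m$.} Granting this, the lemma follows by the linear substitution $m=2n+1$.

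To establish the reduced claim I would appeal to the Kauffman bracket state-sum description of the colored Jones polynomial. Pick a reduced alternating diagram $D$ of $K$ with $c_+$ positive and $c_-$ negative crossings. Then $J_K(m)$ is computable, up to a framing correction depending on the writhe, from the Kauffman bracket of the $(m-1)$-parallel of $D$ with Jones--Wenzl projectors inserted along each component. For alternating diagrams the extremal degrees of this bracket are controlled by the all-$A$ and all-$B$ smoothings, and the standard Kauffman--Murasugi--Thistlethwaite analysis, generalized to arbitrary cables, yields
$$d_\pm[J_K(m)] = A_\pm(K)\, m^2 + B_\pm(K)\, m + C_\pm(K),$$
where $A_\pm(K)$ depends only on $c_\pm$ and the writhe of $D$. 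Non-triviality of $K$ forces $A_+(K)\neq A_-(K)$, so the leading coefficients do not cancel on subtraction, and $br[J_K(m)]=d_+[J_K(m)]-d_-[J_K(m)]$ is genuinely quadratic in $m$. This computation has been carried out in the literature (for example in Le \cite{Le06}, and in the work of Dasbach--Lin on the head and tail of the colored Jones polynomial), so I would cite the result rather than rederive it.

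The hard part of the approach is precisely the state-sum bookkeeping for the cabled-and-projected diagram: one must verify that the all-$A$ and all-$B$ states remain extremal for every $m\geq 2$, and that their contributions are not cancelled by intermediate states. This is exactly where the alternating hypothesis is essential, and where the non-triviality assumption is used to conclude that the two leading coefficients differ. Once this is in hand, the passage back to $\BJ_K$ is immediate: expanding $A_\pm(K)(2n+1)^2+B_\pm(K)(2n+1)+C_\pm(K)$ gives a quadratic in $n$ whose leading coefficient is $4(A_+(K)-A_-(K))\neq 0$, so $br[\BJ_K(n)]$ is a quadratic polynomial in $n$.
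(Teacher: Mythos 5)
Your proposal is correct and follows essentially the same route as the paper: reduce to the claim that $br[J_K(m)]$ is quadratic in $m$ for a non-trivial alternating knot, cite this from the literature (the paper uses exactly \cite[Proposition 2.1]{Le06}), and conclude by the substitution $m=2n+1$. The additional sketch of the Kauffman bracket state-sum argument is background to the cited result rather than a departure from the paper's proof.
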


\begin{proof}
Since $K$ is a non-trivial alternating knot, \cite[Proposition 2.1]{Le06} implies that $br[J_K(n)]$ is a quadratic polynomial in $n$. Since $br[\BJ_K(n)]= br[J_K(2n+1)]$, the lemma follows.
\end{proof}

\begin{proposition}
\label{alt}
Suppose $K$ is a non-trivial alternating knot. Then the recurrence polynomial $\alpha_{\BJ_K}$ of $\BJ_K$ has $L$-degree $>1$.
\end{proposition}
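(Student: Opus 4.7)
The plan is to argue by contradiction: assume $\alpha_{\BJ_K}$ has $L$-degree at most one. The $L$-degree zero case is standard and immediate, since $\BJ_K(0)=1\ne 0$ and a nonzero $P(t,M)\in\BC[t^{\pm1},M]$ specializes to a nonzero Laurent polynomial $P(t,t^{2n})$ for all but finitely many $n$, so $P\BJ_K=0$ would force $\BJ_K(n)=0$ for all large $n$, contradicting the quadratic breadth growth from Lemma \ref{deg}. Hence assume $\alpha_{\BJ_K}=\alpha_1(t,M)L+\alpha_0(t,M)$ with both $\alpha_0,\alpha_1$ nonzero and coprime in $\BC[t^{\pm1},M]$.

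The first key step is to use Lemma \ref{inv} to extract a symmetry between $\alpha_0$ and $\alpha_1$. Applying Lemma \ref{inv} to $\alpha_{\BJ_K}$ and then left-multiplying the resulting element by $L$ (using the commutation rule $Lf(M)=f(t^2M)L$) produces
\[
\alpha_0(t,(t^4M)^{-1})\,L+\alpha_1(t,(t^4M)^{-1})\in\CA_{\BJ_K},
\]
which still has $L$-degree one. By the minimality of $\alpha_{\BJ_K}$ in $\tilde{\CA}_{\BJ_K}$, this element must be an $\CR(M)$-multiple of $\alpha_{\BJ_K}$; matching coefficients and cross-multiplying yields the polynomial identity
\[
\alpha_0(t,(t^4M)^{-1})\,\alpha_0(t,M)=\alpha_1(t,(t^4M)^{-1})\,\alpha_1(t,M).
\]
Comparing the top $M$-degrees of the two sides (valid because $\BC[t^{\pm1}]$ is an integral domain, so the leading coefficients cannot cancel) gives $br_M\alpha_0=br_M\alpha_1$, where $br_M$ denotes the $M$-breadth.

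The second key step is to derive the opposite strict inequality from Lemma \ref{deg}. Taking $t$-breadths of both sides of the recurrence $\alpha_1(t,t^{2n})\BJ_K(n+1)=-\alpha_0(t,t^{2n})\BJ_K(n)$ in $\CR$, and using additivity of breadth on products in the integral domain $\CR$, gives
\[
br[\BJ_K(n+1)]-br[\BJ_K(n)]=br[\alpha_0(t,t^{2n})]-br[\alpha_1(t,t^{2n})].
\]
For $n\to+\infty$ the right side equals $2n\,(br_M\alpha_0-br_M\alpha_1)+O(1)$, while Lemma \ref{deg} forces the left side to be $2An+O(1)$ for some $A>0$ (the leading coefficient of the genuinely quadratic polynomial $br[\BJ_K(n)]$, which must be positive since breadth is nonnegative). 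Therefore $br_M\alpha_0-br_M\alpha_1=A>0$, contradicting the identity $br_M\alpha_0=br_M\alpha_1$ from the first step. The main obstacle is the bookkeeping in the first step: one must carefully unwind Lemma \ref{inv} against the noncommutative structure of $\tilde\CT$ and then invoke the principal ideal property of $\tilde{\CA}_{\BJ_K}$ in order to justify cross-multiplying into the symmetric identity.
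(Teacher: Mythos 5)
Your proof is correct and follows essentially the same route as the paper's: both use Lemma \ref{inv} together with the principality of $\tilde{\CA}_{\BJ_K}$ to extract a symmetry between the two coefficients of a putative degree-one $\alpha_{\BJ_K}$, and then show that the resulting first-order recurrence forces $br[\BJ_K(n)]$ to grow only linearly, contradicting Lemma \ref{deg}. The only cosmetic difference is that the paper uses coprimality to pin down the proportionality factor as a unit $\pm t^k M^l$ and concludes the breadth increments are eventually constant, whereas you cross-multiply and compare $M$-breadths; both yield the same contradiction.
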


\begin{proof}

Suppose that $\alpha_{\BJ_K}(t,M,L)=P_1(t,M)L + P_0(t,M)$, where $P_1, P_0 \in \BZ[t^{\pm 1}, M]$ are co-prime. Note that the polynomial $\alpha_{\BJ_K}(t,(t^2M)^{-1},L^{-1})=P_1(t,t^{-2}M^{-1})L^{-1} + P_0(t,t^{-2}M^{-1})$ is in the recurrence ideal $\CA_{\BJ_K}$ of $\BJ_K$, by Lemma \ref{inv}. Since $\alpha_{\BJ_K}$ is the generator of $\tilde{\CA}_{\BJ_K}=\tilde{\CT} \CA_{\BJ_K}$ in $\tilde{\CT}$, there exists $\gamma(t,M) \in \CR(M)$ such that
$$\gamma(t,M) L \left( P_1(t,t^{-2}M^{-1})L^{-1} + P_0(t,t^{-2}M^{-1}) \right)= P_1(t,M)L + P_0(t,M).$$ 
This is equivalent to $P_0(t,M)=\gamma(t,M) P_1(t,t^{-4}M^{-1})$ and $P_1(t,M)=\gamma(t,M) P_0(t,t^{-4}M^{-1}).$ Since $P_0$ and $P_1$ are coprime in $\BZ[t^{\pm 1}, M]$, it follows from the above equations that $\gamma(t,M)$ is a unit element in $\BZ[t^{\pm 1}, M^{\pm 1}]$, i.e. $\gamma(t,M)=\pm t^kM^l$. Hence $P_0(t,M)=\pm t^kM^l P_1(t,t^{-4}M^{-1}).$

The equation $\alpha_{\BJ_K} \BJ_K=0$ can now be written as 
$$\BJ_K(n+1)=\pm \frac{ t^{2nl+k} P_1(t,t^{-4-2n})}{P_1(t,t^{2n})} \, \BJ_K(n).$$
This implies that $$br[\BJ_K(n+1)]-br[\BJ_K(n)]=br(t^{2nl+k} P_1(t,t^{-4-2n}))-br(P_1(t,t^{2n}).$$ It is easy to see that for $n$ big enough, $br(t^{2nl+k} P_1(t,t^{-4-2n}))-br(P_1(t,t^{2n}))$ is a constant independent of $n$. Hence the breadth of $\BJ_K(n)$, for $n$ big enough, is a linear function on $n$. This contradicts Lemma \ref{deg}, since $K$ is a non-trivial alternating knot.
\end{proof}

Let $\ve$ be the map reducing $t=-1$.

\begin{proposition}
\label{L-1}
For any $P \in \CA_{\BJ_K}$, $\ve(P)$ is divisible by $L-1$.
\end{proposition}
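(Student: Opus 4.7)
The plan is to expand the annihilation equation $(P\BJ_K)(n)=0$ as a power series in $(t+1)$ about $t=-1$ and extract, order by order, constraints on the reduced coefficients $c_{kl}:=\lambda_{k,l}(-1)$, where $P=\sum_{k,l}\lambda_{k,l}(t)M^kL^l$. Two inputs enter. First, $J_K(N)|_{t=-1}=N$ for any framed knot (the quantum dimension at $q=1$), hence $\BJ_K(n)|_{t=-1}=2n+1$. Second, the Taylor expansion takes the form
$$\BJ_K(n)=(2n+1)+\sum_{r\ge 2} g_r(n)\,(t+1)^r,$$
with no linear term in $(t+1)$ (from $J_K(N)\in\BZ[t^{\pm 2}]$ together with the vanishing of order-one Vassiliev invariants), and with each $g_r(n)$ a polynomial in $n$ of degree at most $r+1$, which follows from Habiro's cyclotomic expansion applied to $J_K(2n+1)$.

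The core step is an induction on $r\ge 0$ proving that the moment $T_{k^r}:=\sum_{k,l} k^r c_{kl}$ vanishes. The base $r=0$ is the order-$(t+1)^0$ equation $\sum_{k,l} c_{kl}(2(n+l)+1)\equiv 0$, whose $n^1$-coefficient gives $T_{k^0}=\sum c_{kl}=0$. For the inductive step I examine the $n^{r+1}$-coefficient of the order-$(t+1)^r$ equation. Writing $t^{2nk}=\sum_b(-1)^b\binom{2nk}{b}(t+1)^b$, the $(t+1)^r$-contribution of $\lambda_{k,l}(t)\,t^{2nk}\,\BJ_K(n+l)$ is a sum over triples $(a,b,c)$ with $a+b+c=r$; a polynomial-degree count shows that $n^{r+1}$ can only be produced by triples with $a=0$. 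The triple $(0,r,0)$ gives a nonzero scalar multiple of $T_{k^r}$; for each $c\ge 2$, the triple $(0,r-c,c)$ produces, using $\deg_n g_c\le c+1$, only a scalar multiple of $T_{k^{r-c}}$, which is zero by the inductive hypothesis (and $c=1$ is vacuous because $g_1\equiv 0$). Setting the $n^{r+1}$-coefficient to zero thus forces $T_{k^r}=0$.

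Once $T_{k^r}=0$ is established for all $r\ge 0$, set $\alpha_k:=\sum_l c_{kl}$. The support of $\alpha$ is a finite set $S\subset\BZ$ of distinct integers, and $\sum_{k\in S} k^r\alpha_k=T_{k^r}=0$ for $r=0,1,\dots,|S|-1$ is a Vandermonde system whose only solution is $\alpha_k=0$ for every $k$. Hence $\ve(P)|_{L=1}=\sum_k\alpha_k M^k=0$ in $\BC[M^{\pm 1}]$, equivalent to $(L-1)\mid\ve(P)$ in the commutative Laurent ring $\BC[M^{\pm 1},L^{\pm 1}]$. The main obstacle I expect is the bookkeeping at order $r$: establishing the polynomial degree bound $\deg_n g_r(n)\le r+1$ cleanly, and then tracking all triples $(a,b,c)$ to verify that every contribution to the $n^{r+1}$-coefficient of the order-$(t+1)^r$ equation other than the principal term is already annihilated by the moment vanishings produced at lower orders.
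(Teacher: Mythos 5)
Your proof is correct, but it takes a genuinely different route from the paper's. The paper's argument (modeled on Proposition 2.3 of Le's paper) is analytic: it sends $n\to\infty$ along $t^2=z^{1/(2n+1)}$, invokes the full Melvin--Morton--Rozansky theorem to identify $\lim_n J_K(2n+2l+1)/[2n+2l+1]$ with $1/\Delta_K(z)$ \emph{independently of the shift} $l$, and then uses the non-vanishing of $1/\Delta_K(z)$ to force $\ve(P)|_{L=1}=0$. You replace the limit by a formal $(t+1)$-expansion and an induction on the moments $T_{k^r}$, closed off by a Vandermonde argument. The two proofs rest on the same essential input --- the upper-triangularity $\deg_n g_r\le r+1$ of the expansion coefficients, which is the Melvin--Morton degree bound (equivalently, a consequence of Habiro's cyclotomic expansion) --- but your version does not need the Rozansky half of the theorem: because the induction peels off one moment at a time, the role of the non-vanishing of $1/\Delta_K(z)$ is played by the single scalar $\gamma_0=2\ne 0$ coming from $\BJ_K(n)|_{t=-1}=2n+1$. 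That makes your argument more elementary in substance, at the cost of the $(a,b,c)$-bookkeeping; the paper's is shorter once MMR is taken as a black box. Two small remarks: the vanishing of $g_1$ is not actually needed, since the $c=1$ triple contributes a multiple of $T_{k^{r-1}}$, which is already zero by the inductive hypothesis (so your appeal to order-one Vassiliev invariants, while correct, can be dropped); and your stated justification ``$J_K(N)\in\BZ[t^{\pm 2}]$'' alone would not give $g_1\equiv 0$ anyway, since $t^2-1$ vanishes only to first order at $t=-1$.
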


\begin{proof}

The proof of Proposition \ref{L-1} is similar to that of \cite[Proposition 2.3]{Le06}, which makes use of the Melvin-Morton conjecture proved by Bar-Natan and Garoufalidis \cite{BG}.

It is known that for any knot $K$ (with framing zero), $J_K(n)/[n]$ is a Laurent polynomial in $t^4$. Moreover, the Melvin-Morton conjecture \cite{MM} says that for any $z \in \BC^*$ we have $$\lim_{n \to \infty}  \left( \frac{J_K(n)}{[n]} \mid_{t^2=z^{1/n}} \right)=\frac{1}{\Delta_K(z)},$$ where $\Delta_K(z)$ is the Alexander polynomial of $K$.

For $l \in \BZ$ and $z \in \BC \setminus \{0,\pm 1\}$, we let 
\begin{eqnarray*}
\widehat{\BJ}_K(l, z) &:=& \lim_{n \to \infty}  \left( \frac{J_K(2n+2l+1)}{[2n+2l+1]} \mid_{t^2=z^{1/(2n+1)}} \right)\\
&=& \lim_{n \to \infty}  \left( \frac{t^2-t^{-2}}{z-z^{-1}} \, \BJ_K(n+l) \mid_{t^2=z^{1/(2n+1)}} \right) .
\end{eqnarray*}
Then $$\widehat{\BJ}_K(0,z)=\lim_{n \to \infty}  \left( \frac{J_K(2n+1)}{[2n+1]} \mid_{t^2=z^{1/(2n+1)}} \right)=\frac{1}{\Delta_K(z)}.$$ In particular, we have $\widehat{\BJ}_K(0,z) \not= 0$.

\smallskip

\textbf{Claim 1.} For any $l \in \BZ$, we have $\widehat{\BJ}_K(l, z)=\widehat{\BJ}_K(0,z)$.

\smallskip

\textit{Proof of Claim 1.} For any knot $K$, by \cite{MM} we have
$$ \frac{J_K(n)}{[n]}|_{t^4= e^h}= \sum_{k=0}^\infty P_{k}(n) h^k,$$
where $P_k(n)$ is a polynomial in $n$ of degree at most $k$:
$$P_k(n) = P_{k,k} n^k + P_{k,k-1} n^{k-1} +\dots P_{k,1} n +
P_{k,0}.$$
Then 
\begin{eqnarray*}
\widehat{\BJ}_K(l, z) &=& \lim_{n \to \infty}  \left( \frac{J_K(2n+2l+1)}{[2n+2l+1]} \mid_{t^2=z^{1/(2n+1)}} \right)\\
&=& \lim_{n \to \infty}  \left( \sum_{k=0}^\infty \sum_{j=0}^k P_{k,j} \, (2n+2l+1)^j h^k \mid_{h=\frac{2 \ln z}{2n+1}} \right).
\end{eqnarray*}

We have $$\lim_{n \to \infty}(2n+2l+1)^j \left( \frac{2 \ln z}{2n+1} \right)^k  = \begin{cases}
 0 & \text{if $j<k$}\\
 (2\ln z)^k & \text{if $j=k$}
 \end{cases},$$
 which is independent of $l$. Claim 1 follows.

\smallskip

We now complete the proof of Proposition \ref{L-1}. Suppose $P=\sum \lambda_{k,l} M^kL^l$, where $\lambda_{k,l} \in \CR$. Then $
\sum \lambda_{k,l} \, t^{2kn} \BJ_{K}(n+l)=0
$ for all integers $n$. 

For $z \in \BC \setminus \{0,\pm 1\}$, by Claim 1 we have
\begin{eqnarray*}
0 &=& \lim_{n \to \infty} \left( \sum \lambda_{k,l} \, t^{2kn} \frac{t^2-t^{-2}}{z-z^{-1}} \, \BJ_K(n+l) \mid_{t^2=z^{1/(2n+1)}} \right) \\
  &=& \sum (\lambda_{k,l} \mid_{t^2=1}) z^{k/2} \, \widehat{\BJ}_K(l,z) \\
  &=& (P \mid_{t^2=1,M=z^{1/2},L=1}) \widehat{\BJ}_K(0,z).
\end{eqnarray*}
Since $\widehat{\BJ}_K(0,z) \not= 0$, we have $P \mid_{t^2=1,M=z^{1/2},L=1}=0$ for all $z \in \BC \setminus \{0,\pm 1\}$. This implies that $P \mid_{t^2=1}$ is divisible by $L-1$. Proposition \ref{L-1} follows.
\end{proof}

\begin{proposition}
\label{l-1}
$\ve(\alpha_{\BJ_K})$ has $L$-degree 1 if and only if $\alpha_{\BJ_K}$ has $L$-degree 1.
\end{proposition}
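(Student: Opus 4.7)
Write $\alpha := \alpha_{\BJ_K} = \sum_{k=0}^d \alpha_k(t,M) L^k$ with coprime coefficients in $\BZ[t^{\pm 1}, M]$; since a pure $M$-polynomial annihilator would force $\BJ_K \equiv 0$, one has $d \geq 1$, and minimality of $L$-degree in $\tilde{\CT}$ forces $\alpha_0 \neq 0$ as well. The forward direction is immediate: if $d = 1$, then by Proposition \ref{L-1} the polynomial $\ve(\alpha) = \alpha_1(-1, M) L + \alpha_0(-1, M)$ is divisible by $L - 1$. Were $\alpha_1(-1, M)$ zero, the residue $\alpha_0(-1, M) \in \BC[M^{\pm 1}]$ would itself have to be divisible by $L-1$ and hence vanish, so $t+1$ would divide both $\alpha_0$ and $\alpha_1$, contradicting coprimality.

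For the converse, assume $\ve(\alpha)$ has $L$-degree $1$, i.e. $\alpha_k(-1, M) = 0$ for all $k \geq 2$, and suppose for contradiction that $d \geq 2$. The main tool is Lemma \ref{inv}, which gives $\alpha(t, (t^2 M)^{-1}, L^{-1}) \in \CA_{\BJ_K}$. Left-multiplying by $L^d$ produces an element of $\tilde{\CA}_{\BJ_K}$ that is a genuine polynomial in $L$ of degree exactly $d$; matching the top and bottom $L$-degrees against $\alpha$ forces the quotient by $\alpha$ in $\tilde{\CT}$ to lie in $\CR(M)$. Comparing coefficients of $L^j$ then yields
\[
\alpha_{d-j}(t, t^{-2(d+1)} M^{-1}) = \gamma(t,M)\, \alpha_j(t,M), \qquad j = 0, \ldots, d,
\]
for some $\gamma(t, M) \in \CR(M)$; exactly as in the proof of Proposition \ref{alt}, coprimality of the $\alpha_j$'s forces $\gamma(t, M) = \pm t^p M^q$ for some integers $p, q$.

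Evaluating at $t = -1$ gives the palindromic identity $\alpha_{d-j}(-1, M^{-1}) = \pm M^q \alpha_j(-1, M)$. Combined with the assumption $\alpha_k(-1, M) = 0$ for $k \geq 2$, this forces $\alpha_j(-1, M) = 0$ also for $j \leq d - 2$. When $d \geq 3$ the two ranges together cover every $j \in \{0, \ldots, d\}$, so $t + 1$ divides every coefficient $\alpha_j$, contradicting coprimality. The borderline case $d = 2$ is more delicate: only $\alpha_0(-1, M)$ and $\alpha_2(-1, M)$ are eliminated, so $\ve(\alpha) = \alpha_1(-1, M) L$; here one invokes Proposition \ref{L-1} a second time (a monomial in $L$ divisible by $L-1$ must vanish) to deduce $\alpha_1(-1, M) = 0$, again contradicting coprimality. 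The main technical obstacle is setting up the palindromic symmetry cleanly at arbitrary $L$-degree $d$, together with the separate treatment of the $d = 2$ boundary, where the palindrome alone is insufficient and Proposition \ref{L-1} must be re-invoked to close the gap.
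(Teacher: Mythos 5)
Your proof is correct and follows essentially the same route as the paper: both directions rest on Lemma \ref{inv} together with the fact that $\alpha_{\BJ_K}$ generates $\tilde\CA_{\BJ_K}$, which yields exactly the palindromic relation among coefficients that you derive. The only difference is in execution: the paper writes $\alpha_{\BJ_K}=g(M)(L-1)+(1+t)\sum_{k}a_k(M)L^k$ and compares just the $L^0$ coefficients, exploiting that $g(M)$ is $t$-free, which avoids your separate treatment of the borderline case $d=2$.
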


\begin{proof}
The backward direction is obvious since $\varepsilon(\alpha_{\BJ_K})$ is always divisible by $L-1$, by Proposition \ref{L-1}. Suppose that $\varepsilon(\alpha_{\BJ_K})=g(M)(L-1)$ for some  $g(M) \in \BC[M^{\pm 1}] \setminus \{0\}.$	Then
	\begin{equation}
		\alpha_{\BJ_K}= g(M)(L-1)+(1+t) \sum_{k=0}^{d} a_k(M)L^k,
		\label{eq23}
	\end{equation}
	where $a_k(M) \in \CR[M^{\pm 1}]$ and $d$ is the $L$-degree of $\alpha_{\BJ_K}.$
	
	Since $\alpha_{\BJ_K}(t,(t^2M)^{-1},L^{-1})$ is also in the recurrence ideal of $\BJ_K$,  $$\alpha_{\BJ_K}(t,M,L) = h(M) \alpha_{\BJ_K}(t,(t^2M)^{-1},L^{-1}) L^{d}$$ for some $h(M) \in \CR(M)$. Eq. \eqref{eq23} then becomes
	\begin{eqnarray*}
	&& g(M)(L-1)+(1+t) \sum_{k=0}^{d}a_k(M)L^k \\
	&=& h(M)g(t^{-2}M^{-1})(L^{-1}-1)L^{d}+(1+t) \sum_{k=0}^{d} h(M)a_k(t^{-2}M^{-1})L^{d-k}. \end{eqnarray*}

	Suppose that $d>1$. By comparing the coefficients of $L^0$ in both sides of the above equation, we get $-g(M)+(1+t)a_0(M)=(1+t)h(M)a_{d}(t^{-2}M^{-1}).$ This is equivalent to
	\begin{equation}
	g(M)=(1+t)\left( a_0(M)-h(M)a_{d}(t^{-2}M^{-1}) \right).
	\label{eq24}
	\end{equation}
	 Since $g(M)$ is a Laurent polynomial in $M$ with coefficients in $\BC$, Eq. \eqref{eq24} implies that $g(M)=0$. This is a contradiction. Hence $d=1$.
\end{proof}

\section{Proof of Theorem \ref{main}}

\label{figure 8}

Let $E$ be the figure eight knot. By \cite{Habiro} we have
\begin{equation}
\label{Ha}
J_E(n)= [n] \sum_{k=0}^{n-1} \prod_{l=1}^k 
(t^{4n}+t^{-4n}-t^{4l}-t^{-4l}).
\end{equation}

Recall that $E^{(r,2)}$ is the $(r,2)$-cable of $E$ and $\BJ_E(n)=J_E(2n+1)$. By Lemma \ref{J_C}, we have
\begin{equation}
\label{f}
M^r(L+t^{-2r}M^{-2r})J_{E^{(r,2)}} = \BJ_E.
\end{equation} 

For non-zero $f,g \in \BC[M^{\pm 1},L]$, we write $f \eqM g$ if the quotient $f/g$ does not depend on $L$. Proving Theorem \ref{main} is then equivalent to proving that $\ve(\alpha_{E^{(r,2)}}) \eqM A_{E^{(r,2)}}$,
where $A_{E^{(r,2)}}=$ $$(L-1) \left\{ L^2 - ( (M^8+M^{-8}-M^4-M^{-4}-2)^2 -2 ) L+1 \right\} (L+M^{-2r})$$ is the A-polynomial of $E^{(r,2)}$ c.f. \cite{NZ}. 

The proof of $\ve(\alpha_{E^{(r,2)}}) \eqM A_{E^{(r,2)}}$ is divided into 4 steps.

\subsection{Degree formulas for the colored Jones polynomials} The following lemma will be  used later in the proof of Theorem \ref{main}.

\label{degr}

\begin{lemma}
\label{degree}
For $n>0$ we have 
\begin{eqnarray*}
d_+[J_E(n)] &=&  4n^2-2n-2,\\
d_-[J_E(n)] &=& -4n^2+2n+2,\\
d_+[J_{E^{(r,2)}}(n)] &=& \begin{cases} 16n^2-(2r+20)n+2r+4 &\mbox{if } r \ge -7 \\ 
-2rn^2+2r & \mbox{if } r \le -9, \end{cases} \\
d_-[J_{E^{(r,2)}}(n)] &=& \begin{cases} -2rn^2+2r &\mbox{if } r \ge 9 \\ 
-16n^2-(2r-20)n+2r-4 & \mbox{if } r \le 7. \end{cases}
\end{eqnarray*}
\end{lemma}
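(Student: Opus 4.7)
The plan is to extract the extreme $t$-degrees directly from the closed forms \eqref{Ha} and \eqref{cables}, working summand by summand and verifying that the extremizing summand is isolated so that no cancellation occurs.

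For $d_\pm[J_E(n)]$, the starting point is Habiro's formula. The quantum integer $[n]$ contributes $d_\pm = \pm(2n-2)$ with coefficient $1$, while the inner product $\prod_{l=1}^{k}(t^{4n}+t^{-4n}-t^{4l}-t^{-4l})$ has $d_\pm = \pm 4nk$, attained uniquely by selecting $\pm t^{\pm 4n}$ from every factor (since $4n>4l$ for $l \le k \le n-1$). The maximum of $4nk$ over $k \in \{0,\ldots,n-1\}$ occurs uniquely at $k=n-1$, giving $d_+[J_E(n)] = (2n-2)+4n(n-1) = 4n^2-2n-2$ with leading coefficient $1$; the formula for $d_-$ follows by the analogous argument at the bottom (or by the amphichirality of $E$).

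For the cables, I would denote the $k$-th summand in \eqref{cables} by $T_k := t^{-2r(n^2-1)}(-1)^{r(n-k)}t^{2rk(k-1)}J_E(2k-1)$. Substituting $d_\pm[J_E(2k-1)] = \pm(16k^2-20k+4)$ yields
\begin{align*}
d_+[T_k] &= (2r+16)k^2-(2r+20)k-2rn^2+2r+4,\\
d_-[T_k] &= (2r-16)k^2-(2r-20)k-2rn^2+2r-4,
\end{align*}
each a quadratic in $k$ to be optimized over $\{1,\ldots,n\}$. The sign of the leading coefficient $2r\pm 16$ controls convexity, and in each case listed in the lemma a comparison of the vertex with $1$ shows that the extremum on $[1,n]$ is attained at an endpoint. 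For $d_+$: when $r \ge -7$ (convex) the maximum is attained at $k=n$, giving $16n^2-(2r+20)n+2r+4$; when $r \le -9$ (concave, vertex below $1$) it is attained at $k=1$, giving $-2rn^2+2r$. The analysis for $d_-$ is symmetric, split at $r \le 7$ (concave) versus $r \ge 9$ (convex).

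The main technical point that needs to be settled is that no cancellation destroys the prediction. Once the extremizing $k$ is shown to be unique among $\{1,\ldots,n\}$, the extremal $t$-coefficient of $J_{E^{(r,2)}}(n)$ inherits the unit leading (or trailing) coefficient of $J_E(2k-1)$, multiplied by the monomial prefactor and the sign $(-1)^{r(n-k)}$, and so is $\pm 1$. Uniqueness of the extremizer follows from strict monotonicity of the quadratic on $[1,n]$ whenever the vertex lies strictly to the left of $1$, which is the generic situation; the borderline cases and the small values of $n$ (notably $n=1$, where $J_{E^{(r,2)}}(1)=J_E(1)=1$ and all four formulas collapse to $0$) are to be dispatched by direct computation. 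I expect the delicate step to be exactly this case-separated vertex/uniqueness bookkeeping, whereas the algebra producing the four quadratic formulas for $d_\pm[T_k]$ is routine.
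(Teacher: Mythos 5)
Your proposal follows essentially the same route as the paper: read off $d_\pm[J_E(n)]$ from Habiro's formula, substitute $d_\pm[J_E(2k-1)]=\pm(16k^2-20k+4)$ into the cabling sum to get the quadratic $(2r\pm 16)k^2-(2r\pm 20)k+\cdots$ in $k$, and locate its extremum on $\{1,\dots,n\}$ by convexity/endpoint analysis, with exactly the same case split. The only difference is that you explicitly raise the no-cancellation/unique-extremizer issue (which the paper passes over in silence, simply taking the max of the summand degrees); that is a legitimate extra point of care, and note that in the cases actually used for the main theorem ($|r|\ge 9$) the vertex lies strictly left of $k=1$, so the extremizer is unique and your argument closes.
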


\begin{proof} The first two formulas follow directly from Eq. \eqref{Ha}. We now prove the formula for $d_+[J_{E^{(r,2)}}(n)]$. The one for $d_-[J_{E^{(r,2)}}(n)]$ is proved similarly. 

From Eq. \eqref{cables}, we have
\begin{eqnarray*}
d_+[J_{E^{(r,2)}}(n)] &=& -2r(n^2-1) + \max_{1 \le k \le n} \{ 2rk(k-1)+ d_+[J_E(2k-1)]\}\\
                  &=& -2r(n^2-1) + \max_{1 \le k \le n} \{ (2r+16)k^2-(2r+20)k+4\}.
\end{eqnarray*}
Let $f(k):=(2r+16)k^2-(2r+20)k+4$, where $1 \le k \le n$. If $r \ge -7$, $f(k)$ attains its maximum at $k=n$. If $r \le -9$, $f(k)$ attains its maximum at $k=1$. The lemma follows.
\end{proof}

\subsection{An inhomogeneous recurrence relation for $\BJ_E$} Let 
\begin{eqnarray*}
P_1(t,M) &:=& t^{-2}M^2-t^2M^{-2}, \\
P_{-1}(t,M) &:=& t^2M^2-t^{-2}M^{-2}, \\ 
P_0(t,M) &:=& (M^2-M^{-2})(-M^4-M^{-4}+M^2+M^{-2}+t^4+t^{-4}).
\end{eqnarray*}
From \cite[Proposition 4.4]{CM} (see also \cite{GS}) we have 
\begin{equation}
\label{CM}
(P_1L+P_{-1}L^{-1}+P_0)J_E \in \CR[M^{\pm 1}].
\end{equation}

Let 
\begin{eqnarray*} 
Q_1(t,M) &:=& P_1(t,M)P_1(t,t^2M)P_0(t,t^{-2}M),\\
Q_{-1}(t,M) &:=& P_{-1}(t,M)P_{-1}(t,t^{-2}M)P_0(t,t^{2}M),\\
Q_0(t,M) &:=& P_1(t,M)P_{-1}(t,t^{2}M)P_0(t,t^{-2}M) + P_{-1}(t,M)P_{1}(t,t^{-2}M)P_0(t,t^2M)\\
         && - \, P_0(t,M)P_0(t,t^2M)P_0(t,t^{-2}M).
\end{eqnarray*} 

\begin{proposition}
\label{ff}
We have $$\left\{ Q_1(t,t^2M^2) L + Q_{-1}(t,t^2M^2)  L^{-1}+Q_0(t,t^2M^2) \right\}\BJ_E \in \CR[M^{\pm 1}].$$
\end{proposition}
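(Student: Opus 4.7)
The plan is to combine three consecutive instances of the inhomogeneous recurrence \eqref{CM} (at indices $2n$, $2n+1$, and $2n+2$) and to eliminate the ``extra'' even-indexed values $J_E(2n)$ and $J_E(2n+2)$ that arise when one tries to push \eqref{CM} through the substitution $m\mapsto 2n+1$. The trick is that each of those two unwanted values appears in two of the three shifted equations, so they can be solved for and substituted away.

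Concretely, let $c(t,M)\in\CR[M^{\pm 1}]$ be the polynomial guaranteed by \eqref{CM}, so that
\[
P_1(t,t^{2m})\,J_E(m+1)+P_0(t,t^{2m})\,J_E(m)+P_{-1}(t,t^{2m})\,J_E(m-1)=c(t,t^{2m})
\]
for every $m\in\BZ$. Specializing $m$ to $2n,\,2n+1,\,2n+2$, setting $M=t^{2n}$ (so $t^{4n}=M^2$), and recalling $\BJ_E(n)=J_E(2n+1)$, I obtain three linear relations among the five quantities $\BJ_E(n-1),\BJ_E(n),\BJ_E(n+1),J_E(2n),J_E(2n+2)$:
\begin{eqnarray*}
P_1(t,M^2)\BJ_E(n)+P_0(t,M^2)\,J_E(2n)+P_{-1}(t,M^2)\BJ_E(n-1)&=&c(t,M^2),\\
P_1(t,t^2M^2)\,J_E(2n+2)+P_0(t,t^2M^2)\BJ_E(n)+P_{-1}(t,t^2M^2)\,J_E(2n)&=&c(t,t^2M^2),\\
P_1(t,t^4M^2)\BJ_E(n+1)+P_0(t,t^4M^2)\,J_E(2n+2)+P_{-1}(t,t^4M^2)\BJ_E(n)&=&c(t,t^4M^2).
\end{eqnarray*}
The first and third equations express $J_E(2n)$ and $J_E(2n+2)$ rationally in $\BJ_E(n-1),\BJ_E(n),\BJ_E(n+1)$ (and $c$-values). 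Substituting into the middle equation and clearing denominators by multiplying through by $P_0(t,M^2)\,P_0(t,t^4M^2)$ gives a single identity that no longer involves $J_E(2n)$ or $J_E(2n+2)$.

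A direct comparison with the definitions of $Q_1,Q_{-1},Q_0$ (evaluated at the argument $t^2M^2$, using $t^{-2}\cdot t^2M^2=M^2$ and $t^2\cdot t^2M^2=t^4M^2$) shows that the coefficients of $\BJ_E(n+1),\BJ_E(n-1),\BJ_E(n)$ in the resulting identity are exactly $-Q_1(t,t^2M^2),-Q_{-1}(t,t^2M^2),-Q_0(t,t^2M^2)$, respectively, while the right-hand side is an $\CR[M^{\pm 1}]$-linear combination of $c(t,M^2),c(t,t^2M^2),c(t,t^4M^2)$ and therefore lies in $\CR[M^{\pm 1}]$. Multiplying through by $-1$ yields the proposition. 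The only real obstacle is bookkeeping: the middle coefficient produced by the elimination is the three-term asymmetric sum
\[
-P_1(t,t^2M^2)P_{-1}(t,t^4M^2)P_0(t,M^2)+P_0(t,t^2M^2)P_0(t,M^2)P_0(t,t^4M^2)-P_{-1}(t,t^2M^2)P_1(t,M^2)P_0(t,t^4M^2),
\]
which must be matched against $-Q_0(t,t^2M^2)$; once the $t^{\pm 2}$-shifts inside the definition of $Q_0$ are tracked, this is immediate.
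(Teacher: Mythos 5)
Your proof is correct and is essentially the paper's own argument in different packaging: the paper combines the same three consecutive instances of \eqref{CM} by left-multiplying the operator $P_1L+P_{-1}L^{-1}+P_0$ by $P_1(t,M)P_0(t,t^{-2}M)L+P_{-1}(t,M)P_0(t,t^{2}M)L^{-1}-P_0(t,t^{2}M)P_0(t,t^{-2}M)$ (an operator factorization of $Q_1L^2+Q_{-1}L^{-2}+Q_0$), which is precisely your elimination of $J_E(2n)$ and $J_E(2n+2)$ with denominators cleared, and then performs the substitution $M\mapsto t^2M^2$, $L^2\mapsto L$ at the end rather than at the start. The resulting identity and coefficient matching are identical, so no further comparison is needed.
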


\begin{proof}
We first note that
\begin{eqnarray*}
&& Q_1(t,M) L^2+Q_{-1}(t,M) L^{-2}+Q_0(t,M) \\
 &=& P_1(t,M)P_1(t,t^2M)P_0(t,t^{-2}M) L^2 + P_{-1}(t,M)P_{-1}(t,t^{-2}M)P_0(t,t^{2}M) L^{-2} \\
&& + \, P_1(t,M)P_{-1}(t,t^{2}M)P_0(t,t^{-2}M) + P_{-1}(t,M)P_{1}(t,t^{-2}M)P_0(t,t^2M)\\
         && - \, P_0(t,M)P_0(t,t^2M)P_0(t,t^{-2}M)\\
&=& \left\{ P_1(t,M)P_0(t,t^{-2}M) L + P_{-1}(t,M) P_0(t,t^2M) L^{-1} - P_0(t,t^2M) P_0(t,t^{-2}M) \right\} \\
&& \times \left\{ P_1(t,M) L+P_{-1}(t,M) L^{-1}+P_0(t,M) \right\}.
\end{eqnarray*} 
By Eq. \eqref{CM} we have $(P_1L+P_{-1}L^{-1}+P_0)J_E \in \CR[M^{\pm 1}]$. Hence 
\begin{equation} 
\label{abc}
(Q_1 L^2 + Q_{-1} L^{-2}+Q_0)J_E \in \CR[M^{\pm 1}].
\end{equation}

We have $(M^k L^{2l}J_E)(2n+1)=((t^2M^2)^kL^{l}\BJ_E)(n)$. It follows that $$(P(t,M)L^{2l}J_E)(2n+1)=(P(t,t^2M^2)L^{l}\BJ_E)(n)$$ for any $P(t,M) \in \CR[M^{\pm 1}].$ Hence Eq. \eqref{abc} implies that
$$\left\{ Q_1(t,t^2M^2) L + Q_{-1}(t,t^2M^2)  L^{-1}+Q_0(t,t^2M^2) \right\}\BJ_E \in \CR[M^{\pm 1}].$$
This proves Proposition \ref{ff}.
\end{proof}

\subsection{A recurrence relation for $J_{E^{(r,2)}}$}

Let $$Q(t,M, L):=Q_1(t,t^2M^2) L + Q_{-1}(t,t^2M^2)  L^{-1}+Q_0(t,t^2M^2).$$ By Proposition \ref{ff}, we have $Q\BJ_E \in \CR[M^{\pm 1}].$  Eq. \eqref{f} then implies that 
\begin{equation}
\label{bap}
QM^r(L+t^{-2r}M^{-2r})J_{E^{(r,2)}} \in  \CR[M^{\pm 1}].
\end{equation}

Let $Q'(t,M):=LQ(t,M)M^r(L+t^{-2r}M^{-2r})$. From Eq. \eqref{bap} we have $Q'J_{E^{(r,2)}} \in  \CR[M^{\pm 1}]$. 

Let $R:=Q'J_{E^{(r,2)}} \in  \CR[M^{\pm 1}]$. We claim that $R \not= 0$, which means that $Q'J_{E^{(r,2)}}=R$ is an inhomogeneous recurrence relation for $J_{E^{(r,2)}}$. Indeed, assume that $R=0$. Then $Q'$ annihilates the colored Jones function $J_{E^{(r,2)}}$. By \cite[Proposition 2.3]{Le06}, $\ve(Q')$ is divisible by $L-1$. However this cannot occur, since 
$$
\ve(Q') \eqM \left\{ L^2 - \left( (M^8+M^{-8}-M^4-M^{-4}-2)^2 -2 \right) L+1 \right\} (L+M^{-2r})
$$
is not divisible by $L-1$. Hence $R \not= 0$ in $\CR[M^{\pm 1}]$.

Write $R(t,M)=(1+t)^m R'(t,M)$, where $m \ge 0$ and $R'(-1,M) \not= 0$ in $\BC[M^{\pm 1}]$. Let $$S(t,M,L):=(R'(t,M)L-R'(t,t^2M))Q'(t,M).$$ Since $Q'J_{E^{(r,2)}}=(1+t)^m R' \in  \CR[M^{\pm 1}]$ is an inhomogeneous recurrence relation for $J_{E^{(r,2)}}$, we have the following.

\begin{proposition}
The polynomial $S \in \CT$ annihilates the colored Jones function $J_{E^{(r,2)}}$ and has $L$-degree 4.
\end{proposition}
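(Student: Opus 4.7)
The plan is to verify the two assertions directly. For annihilation, the key observation is that $Q' J_{E^{(r,2)}} = (1+t)^m R'(t,M)$ is, as a discrete function of $n$, simply $n \mapsto (1+t)^m R'(t, t^{2n})$: it depends only on the current value of $M$ with no shifts. The operator $R'(t,M) L - R'(t, t^2 M)$ is tailor-made to annihilate any function of this shape: applied to $n \mapsto c \, R'(t, t^{2n})$, the $L$ in the first term evaluates $R'$ at $t^{2n+2}$, which matches the coefficient $R'(t, t^2 M) = R'(t, t^{2n+2})$ in the second term, so the two contributions cancel. Hence $S J_{E^{(r,2)}} = (R'(t,M) L - R'(t, t^2 M))(Q' J_{E^{(r,2)}}) = 0$.

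For the $L$-degree, I would track $L$-powers stepwise through the definition of $S$. The operator $Q$ from Proposition \ref{ff} has $L$-powers $\{-1, 0, 1\}$; left-multiplication by $L$, using the commutation rule $L \cdot f(M) = f(t^2 M) L$, shifts these to $\{0, 1, 2\}$ and in particular clears the $L^{-1}$ term. Right-multiplication by $M^r$ preserves $L$-powers, and multiplication by $(L + t^{-2r} M^{-2r})$ extends them to $\{0, 1, 2, 3\}$, so $Q'$ has $L$-degree $3$. Finally, left-multiplication by the $L$-degree $1$ factor $R'(t,M) L - R'(t, t^2 M)$ extends the range to $\{0, 1, 2, 3, 4\}$, so $S$ has $L$-degree at most $4$.

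The remaining task is to confirm that the extreme coefficients of $L^4$ and $L^0$ in $S$ do not vanish, so that the $L$-degree is exactly $4$. After expanding and tracking the commutation rule, the $L^4$ coefficient is a nonzero scalar multiple of $R'(t, M) \, Q_1(t, t^{10} M^2) \, M^r$, while the $L^0$ coefficient is a nonzero scalar multiple of $R'(t, t^2 M) \, Q_{-1}(t, t^6 M^2) \, M^{-r}$. Since $\CT$ is a domain and each factor is visibly nonzero---$R' \neq 0$ by its defining normalization, and $Q_{\pm 1}$ are nonzero from their explicit formulas---both extreme coefficients survive. I expect the only (very mild) obstacle to be the bookkeeping of $t$-shifts in $M$ while expanding the product; nothing conceptual stands in the way.
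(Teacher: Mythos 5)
Your proposal is correct and matches the paper's (largely implicit) argument: the paper obtains the proposition directly from the fact that $Q'J_{E^{(r,2)}}=(1+t)^mR'$ is an inhomogeneous recurrence, and your cancellation computation with $R'(t,M)L-R'(t,t^2M)$ is exactly the standard homogenization step being invoked. Your degree bookkeeping, including the identification of the nonvanishing leading coefficient $R'(t,M)Q_1(t,t^{10}M^2)M^r$ up to a power of $t$, is also accurate.
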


\subsection{Completing the proof of Theorem \ref{main}} Note that $S$ has $L$-degree 4 and $\ve(S) \eqM A_{E^{(r,2)}}.$ Hence to complete the proof of Theorem \ref{main}, we only need to show that if $|r| \ge 9$ then $S$ is equal to the recurrence polynomial $\alpha_{E^{(r,2)}}$ in $\tilde{\CT}$, up to a rational function in $\CR(M)$. This is achieved by showing that there does not exist a non-zero polynomial $P \in \CR[M^{\pm 1}][L]$ of degree $\le 3$ that annihilates the colored Jones function $J_{E^{(r,2)}}$. We will make use of the degree formulas in Subsection \ref{degr}. 

From now on we assume that $r$ is an odd integer satisfying $|r| \ge 9$. Suppose that $P=P_3L^3+P_2L^2+P_1L+P_0$, where $P_k \in \CR[M^{\pm 1}]$, annihilates $J_{E^{(r,2)}}$. We want to show that $P_k=0$ for $0 \le k \le 3$. 

Indeed, by applying Lemma \ref{J_C} we have 
\begin{eqnarray*}
0 &=& P_3 J_{E^{(r,2)}}(n+3) + P_2 J_{E^{(r,2)}}(n+2) + P_1 J_{E^{(r,2)}}(n+1) + P_0 J_{E^{(r,2)}}(n)\\
  &=& \left( -t^{-2r(6n+9)}P_3+t^{-2r(4n+4)}P_2-t^{-2r(2n+1)}P_1+P_0 \right) J_{E^{(r,2)}}(n) \\
  && + \left( t^{-2r(5n+8)}P_3-t^{-2r(3n+3)}P_2+t^{-2rn}P_1 \right) J_E(2n+1) \\
  && + \left( -t^{-2r(3n+6)}P_3+t^{-2r(n+1)}P_2 \right) J_E(2n+3)+ t^{-2r(n+2)}P_3 J_E(2n+5)\\
  &=& P'_3J_{E^{(r,2)}}(n) + P'_2 J_E(2n+5) + P'_1 J_E(2n+3) +P'_0 J_E(2n+1).
\end{eqnarray*}
It is easy to see that $P_k=0$ for $0 \le k \le 3$ if and only if $P'_k=0$ for $0 \le k \le 3$. 

Let $g(n)=P'_2 J_E(2n+5) + P'_1 J_E(2n+3) +P'_0 J_E(2n+1)$. Then 
\begin{equation}
\label{g(n)}
P'_3J_{E^{(r,2)}}(n)+g(n)=0.
\end{equation}

We first show that $P'_3=0$. Indeed, assume that $P'_3 \not=0$ in $\CR[M^{\pm 1}]$. If $r \ge 9$ then, by Lemma \ref{degree}, we have $$d_-[P'_3 J_{E^{(r,2)}}(n)] =d_-[ J_{E^{(r,2)}}(n)]+O(n)=-2rn^2 + O(n).$$ 
Similarly, we have $d_-[P'_k J_E(2n+2k+1)] = -16n^2+O(n)$ if $P'_k \not= 0$, where $k=0,1,2$.
It follows that, for $n$ big enough,
\begin{eqnarray*}
d_-[P'_3 J_{E^{(r,2)}}(n)] &<& \min \{ d_-[P'_2 J_E(2n+5)], d_-[P'_1 J_E(2n+3)], d_-[P'_0 J_E(2n+1)]\}\\
&\le& d_-[g(n)].
\end{eqnarray*}
Hence $d_-[P'_3 J_{E^{(r,2)}}(n)] < d_-[g(n)].$ This contradicts Eq. \eqref{g(n)}.

If $r \le -9$ then, by similar arguments as above, we have
\begin{eqnarray*}
d_+[P'_3 J_{E^{(r,2)}}(n)] &>& \max \{ d_+[P'_2 J_E(2n+5)], d_+[P'_1 J_E(2n+3)], d_+[P'_0 J_E(2n+1)]\}\\
&\ge& d_+[g(n)].
\end{eqnarray*}
for $n$ big enough. This also contradicts Eq. \eqref{g(n)}. Hence $P'_3=0$.

Since $g(n)=0$, we have $(P'_2 L^2+P'_1 L + P'_0)\BJ_E=0$. This means that $\BJ_E$ is annihilated by $P':=P'_2 L^2+P'_1 L + P'_0$. We claim that $P'=0$ in $\CR[M^{\pm 1}][L]$. Indeed, assume that $P' \not=0$. Since $P'$ annihilates $\BJ_E$, it is divisible by the recurrence polynomial $\alpha_{\BJ_E}$ in $\tilde{\CT}$. It follows that $\alpha_{\BJ_E}$, and hence $\ve(\alpha_{\BJ_E})$, has $L$-degree $\le 2$.

Since $E$ is a non-trivial alternating knot, Propositions \ref{alt}, \ref{L-1} and \ref{l-1}  imply that $\ve(\alpha_{\BJ_E})$ is divisible by $L-1$ and has $L$-degree $\ge 2$. Hence we conclude that $\ve(\alpha_{\BJ_E})$ is divisible by $L-1$ and has $L$-degree exactly 2.

By Proposition \ref{ff}, we have $Q\BJ_E \in \CR[M^{\pm 1}]$. Let $Q'':=Q\BJ_E$. Then $Q'' \not= 0$ (otherwise, $Q$ annihilates $\BJ_E$. However, this contradicts Proposition \ref{L-1} since $\ve(Q) \eqM L^2 - \left( (M^8+M^{-8}-M^4-M^{-4}-2)^2 -2 \right) L+1$ is not divisible by $L-1$). This means that $Q\BJ_E=Q''  \in \CR[M^{\pm 1}]$ is an inhomogeneous recurrence relation for $\BJ_E$

Write $Q''(t,M)=(1+t)^m Q'''(t,M)$, where $m \ge 0$ and $Q'''(-1,M) \not= 0$ in $\BC[M^{\pm 1}]$. Then $(Q'''(t,M)L-Q'''(t,t^2M))Q$ annihilates $\BJ_E$ and hence is divisible by $\alpha_{\BJ_E}$ in $\tilde{\CT}$. Consequently, $(L-1)\ve(Q)$ is divisible by $\ve(\alpha_{\BJ_E})$ in $\BC(M)[L]$. This means $\frac{\ve(\alpha_{\BJ_E})}{L-1}$ divides $\ve(Q)$ in $\BC(M)[L]$. However this cannot occur, since $\frac{\ve(\alpha_{\BJ_E})}{L-1}$ has $L$-degree exactly 1 and $\ve(Q)$ is an irreducible polynomial in $\BC[M^{\pm 1}, L]$ of $L$-degree 2. 

Hence $P'=0$, which means that $P'_k=0$ for $0 \le k \le 2$. Consequently, $P_k=0$ for $0 \le k \le 3$. This completes the proof of Theorem \ref{main}.

\end{document}